\newtheorem{theorem}{Theorem}[section]
\begin{document}

\begin{frontmatter}



\title{The only Class~$0$ Flower snark is the smallest\footnote{This work was supported by CAPES, CNPq, and FAPERJ.}}


\author[pesc]{G. A. Bridi} 
\ead{gabridi@cos.ufrj.br}

\author[pesc]{A. L. A. Martins} 
\ead{andreluis@cos.ufrj.br}

\author[pesc,caxias]{F. L. Marquezino}
\ead{franklin@cos.ufrj.br}

\author[pesc]{C. M. H. Figueiredo}
\ead{celina@cos.ufrj.br}

\address[pesc]{Systems Engineering and Computer Science, Federal University of Rio de Janeiro, Brazil}
\address[caxias]{Duque de Caxias Campus, Federal University of Rio de Janeiro, Brazil}


\begin{abstract}
In graph pebbling, pebbles are distributed across the vertices of a graph. A pebbling move consists of removing two pebbles from one vertex and adding one pebble to an adjacent vertex. The pebbling number is the minimum number $t$ such that, regardless of how the $t$ pebbles are initially placed, it is always possible to perform a sequence of moves that results in at least one pebble placed on any specified target vertex. Graphs whose pebbling number is equal to the number of vertices are called Class~$0$ and provide a challenging set of graphs that resist being characterized. In this note, we answer a question recently proposed by the pioneering study on the pebbling number of snark graphs: we prove that the smallest Flower snark $J_3$ is Class~$0$, establishing that $J_3$ in fact is the only Class~$0$ Flower snark. To do so, we introduce a novel method that relies on bounding arguments and systematic case analysis. Our bound-based approach shows particular promise for $3$-diameter graphs, where the neighborhood structure remains sufficiently constrained to enable effective analysis.
\end{abstract}



\begin{keyword}
graph pebbling \sep graph theory \sep Flower snarks \sep Class~$0$ 
\sep diameter \sep Tietze's graph.



\end{keyword}

\end{frontmatter}


\section{Introduction}
Graph Pebbling is a combinatorial game that studies the movement and distribution of discrete resources---called pebbles---on graphs. The game has a single type of move, called a pebbling move, in which two pebbles are removed from a vertex to place one on an adjacent vertex. A fundamental parameter in graph pebbling is the pebbling number of a graph, which is the minimum number of pebbles required to guarantee that, from any initial distribution, it is possible to move a pebble to any target vertex. Pebbling numbers are known, for instance, for complete graphs, paths, cycles, trees, and cubes~\cite{survey_pebbling}.

A trivial lower bound for the pebbling number of a graph is its number of vertices, and graphs whose pebbling number is the number of vertices are called Class~$0$, a set of graphs that resists being characterized~\cite{Czygrinow, Pachter}. Clarke et al.~\cite{diameter2} proved that every $3$-connected $2$-diameter graph is Class~$0$. Alcón et al.~\cite{pebbling_split} much later studied the pebbling number of a notable $3$-diameter chordal graph family called split graphs, whose vertex set can be partitioned into a clique and an independent set, and managed to establish that split graphs with minimum degree at least $3$ are Class~$0$. Adauto et al.~\cite{WFL_kneser} recently provided evidence in favor of the conjecture that every Kneser graph is Class~$0$ by proving that the diameter three Kneser graph $K(10, 4)$ is Class~$0$.

Snarks~\cite{survey_snarks}, a well-known family of graphs, emerge as particularly challenging in this context. The Petersen graph, the smallest snark, is Class~$0$, and to our knowledge, it was the only snark with a known pebbling number. A pioneering study on the pebbling number of snark graphs was performed by Adauto et al.~\cite{WFL_snarks}. Among their results, they establish that the only Class~$0$ snark of girth at least $5$ is the Petersen graph and that a Class~$0$ snark with girth at most $4$ has its number of vertices at most $22$. They also provide lower and upper bounds for the infinite family of Flower snarks which imply that only its smallest member, $J_3$, could potentially be Class~$0$---a question they left open.

A natural way to address whether the Flower $J_3$ is Class~$0$ is to establish tight upper bounds on its pebbling number. This was precisely the approach used by Adauto et al.~\cite{WFL_snarks} to establish upper bounds for the Flower snarks, employing the Weight Function Lemma~\cite{WFL}, a useful technique grounded in integer linear optimization. Their approach consists of dealing with the dual optimization problem, which is a well-established strategy in the literature (see Refs.~\cite{WFL, pebbling_split, WFL_kneser}). For instance, the aforementioned proof by Adauto et al.~\cite{WFL_kneser} that $K(10, 4)$ is Class~$0$ was carried out using the dual approach. However, the dual approach was not enough to settle the pebbling number of $J_3$. Bridi et al.~\cite{lagos2025} refined the technique by introducing a heuristic to improve the process of finding dual solutions and tightening the upper bounds for Flower snarks, but could not settle the case for $J_3$ either. They actually proved that the upper bound of the smallest Flower snark cannot be further improved using the dual approach, making this technique inherently insufficient to determine whether this graph is Class~$0$. This limitation highlights the intrinsic difficulty of the problem and motivates the need to explore alternative methods. In this sense, to prove that the smallest Flower snark $J_3$ is indeed Class~$0$, we adopt a novel bound-based approach, in which specific subsets of vertices are used both to structure the case division and to derive bounds on the number of pebbles.

\paragraph{Snarks}
In this note, $G = (V, E)$ refers to a simple connected graph. The number of vertices of $G$ is denoted by $n(G)$. The distance between vertices $u, v \in V(G)$ is referred to as $d(u, v)$. The eccentricity of a vertex $v \in V(G)$ is the maximum distance $d(u, v)$ for any vertex $u \in V(G)$, while the diameter of $G$ is the maximum distance $d(u, v)$ for any pair $u, v \in V(G)$. The girth is the length of the smallest cycle in the graph. The $k$-th neighborhood of a vertex $v$, denoted by $N_k(v)$, is the set of all vertices in a graph that are at an exact distance $k$ from $v$. The first neighborhood $N_1(v)$ is often simply called the neighborhood of $v$ and denoted by $N(r)$.

Graphs that are cubic, bridgeless, and $4$-edge-chromatic are known as snarks~\cite{survey_snarks}. The snark family plays a significant role in the context of the Four Color Theorem, which is equivalent to the statement that no snark is planar~\cite{fourColors}. For a detailed history of the snark graphs, see Ref.~\cite{survey_snarks}. The Flower snarks $J_m$~\cite{Campos} are defined for odd integers $m = 2k + 1$ with $m \geq 3$. The number of vertices of $J_m$ is $4m$, and its diameter is $k + 2$. In particular, the smallest Flower $J_3$, shown in Figure~\ref{fig:J3}, has $12$ vertices and a diameter of $3$. Note that $J_3$ is a simple modification of the Petersen graph in which one of its vertices is replaced by a triangle. This graph is also referred to as Tietze’s graph~\cite{Tietze}.

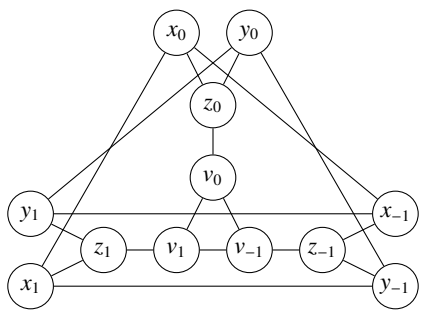
\begin{figure}[htbp]
    \centering
\begin{tikzpicture}[scale=0.48]
    \tikzstyle{every node}=[circle, draw, minimum size=0.6cm, inner sep=0pt]

  \node (A) at (0, 2) {\small $z_0$};
  \node (B) at (0, 0) {\small $v_0$};
  \node (C) at (-1, 4) {\small $x_0$};
  \node (D) at (1, 4) {\small $y_0$};
  \node (E) at (-1, -2) {\small $v_1$};
  \node (F) at (1, -2) {\small $v_{-1}$};
  \node (G) at (-5, -3) {\small $x_1$};
  \node (H) at (5, -1) {\small $x_{-1}$};
  \node (I) at (-5, -1) {\small $y_1$};
  \node (J) at (5, -3) {\small $y_{-1}$};
  \node (K) at (-3, -2) {\small $z_1$};
  \node (L) at (3, -2) {\small $z_{-1}$};
  
  \draw (A) -- (B);
  \draw (A) -- (C);
  \draw (A) -- (D);
  \draw (B) -- (E);
  \draw (B) -- (F);
  \draw (C) -- (G);
  \draw (C) -- (H);
  \draw (D) -- (I);
  \draw (D) -- (J);
  \draw (E) -- (K);
  \draw (F) -- (L);
  \draw (E) -- (F);
  \draw (G) -- (K);
  \draw (I) -- (K);
  \draw (H) -- (L);
  \draw (J) -- (L);
  \draw (H) -- (I);
  \draw (G) -- (J);

\end{tikzpicture}
\caption{The smallest Flower snark $J_3$.}
    \label{fig:J3}
\end{figure}

\paragraph{Graph pebbling}
A configuration $C$ on a graph $G$ is a function that assigns to each vertex $v \in V(G)$ a number $C(v) \in \mathbb{N}$ representing the number of pebbles at that vertex. We define $C(X)$ for a set $X \subseteq V(G)$ as the sum of pebbles on the vertices of $X$, i.e., $C(X) = \sum_{v \in X} C(v)$. A pebbling move removes two pebbles from a vertex $v$ and places one pebble on a neighbor $u \in N(v)$. If there is a combination of pebbling moves that place a pebble on a target vertex $r \in V(G)$, we say that the configuration $C$ is $r$-solvable, while otherwise, $C$ is said to be $r$-unsolvable. The pebbling number $\pi(G)$ of $G$ is the minimum number of pebbles $t$ such that for each target $r \in V(G)$ all configurations with $t$ pebbles are $r$-solvable. The pebbling number is lower bounded by $\pi(G) \geq n(G)$. and we say that $G$ is Class~$0$ if $\pi(G) = n(G)$.

\section{A new Class~$0$ graph}

In order to show that $J_3$ is Class~$0$, it is enough to show that for each possible target $r$, all configurations with $n(J_3) = 12$ pebbles are $r$-solvable. Our proof consists of, for each $r$, dividing the analysis into cases based on the number of pebbles in key subsets of vertices. We then derive bounds on the maximum number of pebbles in an $r$-unsolvable configuration for individual or combined subsets, finally obtaining a contradiction and thus achieving the solvability in each scenario. Besides the bounds on subsets, a key trivial bound for our analysis is that for an individual vertex $v$ in an $r$-unsolvable configuration, $C(v) \leq 2^{d(v, r)} - 1$. This bounds-based approach is successful in most cases, while the remaining ones are handled using more specialized pebbling moves.

\begin{theorem} \label{thm}
The Flower graph $J_3$ is Class~$0$, i.e., the pebbling number of $J_3$ is given by $\pi(J_3) = n(J_3) = 12$.
\end{theorem}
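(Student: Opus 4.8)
The plan is to prove the matching upper bound $\pi(J_3)\le 12$; since $\pi(J_3)\ge n(J_3)=12$ holds trivially, this yields Class~$0$. Equivalently, I would show that for every target $r$ no $12$-pebble configuration is $r$-unsolvable. The first move is to cut down the targets by symmetry. Realizing $J_3$ (Tietze's graph) as the Petersen graph with one vertex expanded into the triangle $\{v_0,v_1,v_{-1}\}$, its automorphism group is the stabilizer of that vertex in $\mathrm{Aut}(\text{Petersen})=S_5$, of order $12$, with exactly three vertex orbits: the triangle $\{v_i\}$, the three hubs $\{z_i\}$, and the six outer vertices $\{x_i,y_i\}$. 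Hence it suffices to treat the three representative targets $r\in\{v_0,z_0,x_0\}$, and the residual stabilizer of each (orders $4,4,2$) can be exploited to collapse symmetric sub-cases.

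For each such $r$ I would record the distance partition $V=\{r\}\cup N_1\cup N_2\cup N_3$ coming from $\mathrm{diam}(J_3)=3$; for instance, for $r=z_0$ one has $N_1=\{v_0,x_0,y_0\}$, $N_2=\{v_{\pm1},x_{\pm1},y_{\pm1}\}$ and $N_3=\{z_1,z_{-1}\}$. Assume toward a contradiction that $C$ is an $r$-unsolvable configuration with $C(V)=12$. Then $C(r)=0$, and the stated per-vertex bound $C(v)\le 2^{d(v,r)}-1$ forces $C\le 1$ on $N_1$, $C\le 3$ on $N_2$, and $C\le 7$ on $N_3$, so in particular $C(N_1)\le 3$.

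The bounding phase then rests on simple \emph{feeding} observations: a vertex $u\in N_2$ with $C(u)\ge 2$ adjacent to a neighbor $n\in N_1$ with $C(n)=1$ immediately solves (push one pebble $u\to n$, then $n\to r$), and each $N_2$-vertex has a unique $N_1$-neighbor. Iterating such deductions for the \emph{key subsets} consisting of each neighbor together with its branch into $N_2\cup N_3$ yields combined inequalities that, whenever the pebbles are reasonably spread out, force $C(V)\le 11$, a contradiction. I would organize the systematic case analysis precisely by the amount of mass sitting in these subsets, using the stabilizer symmetries to avoid redundant cases.

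I expect the genuine obstacle to be the opposite regime, where pebbles are massed on the deepest shell: the vertices $z_1,z_{-1}$ for $r=z_0$, the four vertices $x_{\pm1},y_{\pm1}$ for $r=v_0$, and the \emph{adjacent} pair $v_1,v_{-1}$ for $r=x_0$, each able to hold up to $7$ pebbles. Because the cross-blade edges $x_1y_{-1}$ and $x_{-1}y_1$ make a deep vertex feed several different neighbors' branches, the additive subset bounds double-count and cannot by themselves reach a contradiction — this is exactly the regime in which the LP/weight-function dual is known to be insufficient for $J_3$. The crux is therefore to rule out this handful of concentrated $12$-pebble configurations directly, by combining integrality and parity constraints (a fractional optimum near the bound cannot be realized by integer pebble counts) with explicit multi-step pebbling sequences that merge \emph{both} outgoing routes of a loaded deep vertex at a common neighbor and then reach $r$. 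Verifying that no integer configuration of size $12$ survives these specialized moves is where the main difficulty lies, and it is precisely what the dual approach could not achieve.
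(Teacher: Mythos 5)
Your setup is sound and matches the paper's framework: reduction to the three targets $z_0,x_0,v_0$ by symmetry, the distance partitions, the per-vertex bound $C(v)\le 2^{d(v,r)}-1$ for unsolvable configurations, and the basic feeding moves from $N_2$ into $N_1$ are all correct. But the proposal stops exactly where the proof has to begin. The entire content of the paper's argument is the execution you defer: it defines, for each target, key subsets $A$, $B$, $E$, $F$ (the three ``blades'' and the deep shell), derives quantitative combined bounds such as $C(F)+2\,C(X)\le 10$ and $C(F)+2\,C(A\cup B)\le 14$ from the cross-blade adjacencies, and then runs an exhaustive case split on $C(A\cup B)$ (respectively on $C(E)$ for target $v_0$), exhibiting in each case either a violated inequality or an explicit pebbling sequence. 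Your text replaces all of this with ``iterating such deductions \dots{} yields combined inequalities'' and ``whenever the pebbles are reasonably spread out,'' which is not an argument; and for the concentrated regime you explicitly concede that ``verifying that no integer configuration of size $12$ survives these specialized moves is where the main difficulty lies.'' A proof proposal that identifies the hard cases and leaves them open has a genuine gap, not a stylistic one.

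Moreover, the tools you nominate for the crux are not the ones that resolve it. Vague appeals to ``integrality and parity constraints'' and to a fractional LP optimum not being integrally realizable do not close the concentrated configurations; the paper instead handles them with structural devices you do not anticipate: for target $x_0$ the edge $v_1v_{-1}$ inside the deep shell is used to gather $2k$ pebbles on one endpoint from $C(F)\ge 4k$ pebbles; and for target $v_0$ (the genuinely hard target, with four vertices at distance $3$) the proof first disposes of the cases where some neighbor of $v_0$ carries exactly one pebble by contracting that neighbor and re-running the $z_0$-style analysis on a reduced graph with $11$ pebbles, and only then attacks the remaining situation with a new inequality $C(X)+2\,C(E)\le 11$ and a further case split. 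None of these steps follows from what you wrote, and without them your outline does not rule out, for example, the configuration with $C(z_0)=C(v_1)=C(v_{-1})=0$ and the $12$ pebbles split across the two deep triples---precisely the case class your own diagnosis flags as the obstruction.
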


\begin{proof}
For each target $r$, we assume, by contradiction, that every configuration with $12$ pebbles is $r$-unsolvable. There are three non-symmetric targets, and, without loss of generality, target $r$ can be chosen as $z_0$, $x_0$, and $v_0$~\cite{lagos2025}. Figure~\ref{fig:neighborhoods} shows representations of the graph $J_3$ according to the choice of target vertices $z_0$, $x_0$, and $v_0$, where the nodes are grouped according to their distance with respect to the target vertex. 

\begin{figure}[!t]
    \centering
    \hspace{-0.075\textwidth}
    \begin{minipage}{0.4\textwidth}
        \centering
        \subfigure[]{
            \begin{tikzpicture}[scale=0.45]

            \tikzstyle{main node}=[circle, draw, minimum size=0.6cm, inner sep=0pt]

          \node[main node] (A) at (5, 0) {\small $z_0$};
          \node[main node] (B) at (1, -2.5) {\small $v_0$};
          \node[main node] (C) at (5, -2.5) {\small $x_0$};
          \node[main node] (D) at (9, -2.5) {\small $y_0$};
          \node[main node] (E) at (0, -5.75) {\small $v_1$};
          \node[main node] (F) at (2, -5.75) {\small $v_{-1}$};
          \node[main node] (G) at (4, -5.75) {\small $x_1$};
          \node[main node] (H) at (6, -5.75) {\small $x_{-1}$};
          \node[main node] (I) at (8, -5.75) {\small $y_1$};
          \node[main node] (J) at (10, -5.75) {\small $y_{-1}$};
          \node[main node] (K) at (0, -9) {\small $z_1$};
          \node[main node] (L) at (2, -9) {\small $z_{-1}$};
          
          \draw (A) -- (B);
          \draw (A) -- (C);
          \draw (A) -- (D);
          \draw (B) -- (E);
          \draw (B) -- (F);
          \draw (C) -- (G);
          \draw (C) -- (H);
          \draw (D) -- (I);
          \draw (D) -- (J);
          \draw (E) -- (K);
          \draw (F) -- (L);
          \draw (E) -- (F);
          \draw (G) -- (K);
          \draw (I) -- (K);
          \draw (H) -- (L);
          \draw (J) -- (L);
          \draw (H) -- (I);
          \draw (G) -- (4, -4.5);
          \draw (J) -- (10, -4.5);
          \draw (4, -4.5) -- (10, -4.5);
          
          \draw[dashed] (-1, -1.25) -- (13.75, -1.25);
          \draw[dashed] (-1, -3.75) -- (13.75, -3.75);
          \draw[dashed] (-1, -7.75) -- (13.75, -7.75);
        
          \node at (12.5, 0) {\small $z_0$};
          \node at (12.5, -2.5) {\small $N_1(z_0)$};
          \node at (12.5, -5.75) {\small $N_2(z_0)$};
          \node at (12.5, -9) {\small $N_3(z_0)$};
            
            \end{tikzpicture}
        }
    \end{minipage}
    \hspace{0.06\textwidth}
    \begin{minipage}{0.4\textwidth}
        \centering
        \subfigure[]{
            \begin{tikzpicture}[scale=0.45]

            \tikzstyle{main node}=[circle, draw, minimum size=0.6cm, inner sep=0pt]

          \node[main node] (A) at (5, 0) {\small $x_0$};
          \node[main node] (B) at (1, -2.5) {\small $z_0$};
          \node[main node] (C) at (5, -2.5) {\small $x_1$};
          \node[main node] (D) at (9, -2.5) {\small $x_{-1}$};
          \node[main node] (E) at (0, -5.75) {\small $v_0$};
          \node[main node] (F) at (2, -5.75) {\small $y_0$};
          \node[main node] (G) at (4, -5.75) {\small $z_{1}$};
          \node[main node] (H) at (6, -5.75) {\small $y_{-1}$};
          \node[main node] (I) at (8, -5.75) {\small $z_{-1}$};
          \node[main node] (J) at (10, -5.75) {\small $y_1$};
          \node[main node] (K) at (-1, -9) {\small $v_1$};
          \node[main node] (L) at (1, -9) {\small $v_{-1}$};
          
          \draw (A) -- (B);
          \draw (A) -- (C);
          \draw (A) -- (D);
          \draw (B) -- (E);
          \draw (B) -- (F);
          \draw (C) -- (G);
          \draw (C) -- (H);
          \draw (D) -- (I);
          \draw (D) -- (J);
          \draw (E) -- (K);
          \draw (E) -- (L);
          \draw (K) -- (L);
          \draw (H) -- (I);
          \draw (G) -- (K);
          \draw (I) -- (L);
          \draw (F) -- (2, -4.75);
          \draw (H) -- (6, -4.75);
          \draw (2, -4.75) -- (6, -4.75);
          \draw (F) -- (2, -7);
          \draw (J) -- (10, -7);
          \draw (2, -7) -- (10, -7);
          \draw (G) -- (4, -4.25);
          \draw (J) -- (10, -4.25);
          \draw (4, -4.25) -- (10, -4.25);
        
          \draw[dashed] (-2, -1.25) -- (13.75, -1.25);
          \draw[dashed] (-2, -3.75) -- (13.75, -3.75);
          \draw[dashed] (-2, -7.75) -- (13.75, -7.75);
        
          \node at (12.5, 0) {\small $x_0$};
          \node at (12.5, -2.5) {\small $N_1(x_0)$};
          \node at (12.5, -5.5) {\small $N_2(x_0)$};
          \node at (12.5, -9) {\small $N_3(x_0)$};
            
            \end{tikzpicture}
        }
    \end{minipage}

    \vspace{0.5cm} 
    \begin{minipage}{0.6\textwidth}
        \centering
        \subfigure[]{
            \begin{tikzpicture}[scale=0.45]

            \tikzstyle{main node}=[circle, draw, minimum size=0.6cm, inner sep=0pt]

              \node[main node] (A) at (5, 0) {\small $v_0$};
              \node[main node] (B) at (1, -2.5) {\small $z_0$};
              \node[main node] (C) at (5, -2.5) {\small $v_1$};
              \node[main node] (D) at (9, -2.5) {\small $v_{-1}$};
              \node[main node] (E) at (0, -5) {\small $x_0$};
              \node[main node] (F) at (2, -5) {\small $y_0$};
              \node[main node] (G) at (5, -5) {\small $z_{1}$};
              \node[main node] (H) at (9, -5) {\small $z_{-1}$};
              \node[main node] (I) at (4, -8.25) {\small $x_1$};
              \node[main node] (J) at (6, -8.25) {\small $y_1$};
              \node[main node] (K) at (8, -8.25) {\small $x_{-1}$};
              \node[main node] (L) at (10, -8.25) {\small $y_{-1}$};
              
              \draw (A) -- (B);
              \draw (A) -- (C);
              \draw (A) -- (D);
              \draw (B) -- (E);
              \draw (B) -- (F);
              \draw (C) -- (G);
              \draw (D) -- (H);
              \draw (G) -- (I);
              \draw (G) -- (J);
              \draw (H) -- (K);
              \draw (H) -- (L);
              \draw (C) -- (D);
              \draw (E) -- (I);
              \draw (E) -- (K);
              \draw (F) -- (J);
              \draw (F) -- (L);
              \draw (J) -- (K);
              \draw (I) -- (4, -9.25);
              \draw (L) -- (10, -9.25);
              \draw (4, -9.25) -- (10, -9.25);
              
              \draw[dashed] (-1, -1.25) -- (13.75, -1.25);
              \draw[dashed] (-1, -3.75) -- (13.75, -3.75);
              \draw[dashed] (-1, -6.25) -- (13.75, -6.25);
            
              \node at (12.5, 0) {\small $v_0$};
              \node at (12.5, -2.5) {\small $N_1(v_0)$};
              \node at (12.5, -5) {\small $N_2(v_0)$};
              \node at (12.5, -8.25) {\small $N_3(v_0)$};
            
            \end{tikzpicture}
        }
    \end{minipage}

    \caption{Neighborhood representations of the graph $J_3$ 
    according to the choice of target vertex $r$: (a)~$z_0$, (b)~$x_0$, and (c)~$v_0$. In each subfigure, vertices are placed according to their distance with respect to $r$, with dashed lines indicating the neighborhood levels $N_1(r)$, $N_2(r)$, and $N_3(r)$.}
    \label{fig:neighborhoods}
\end{figure}
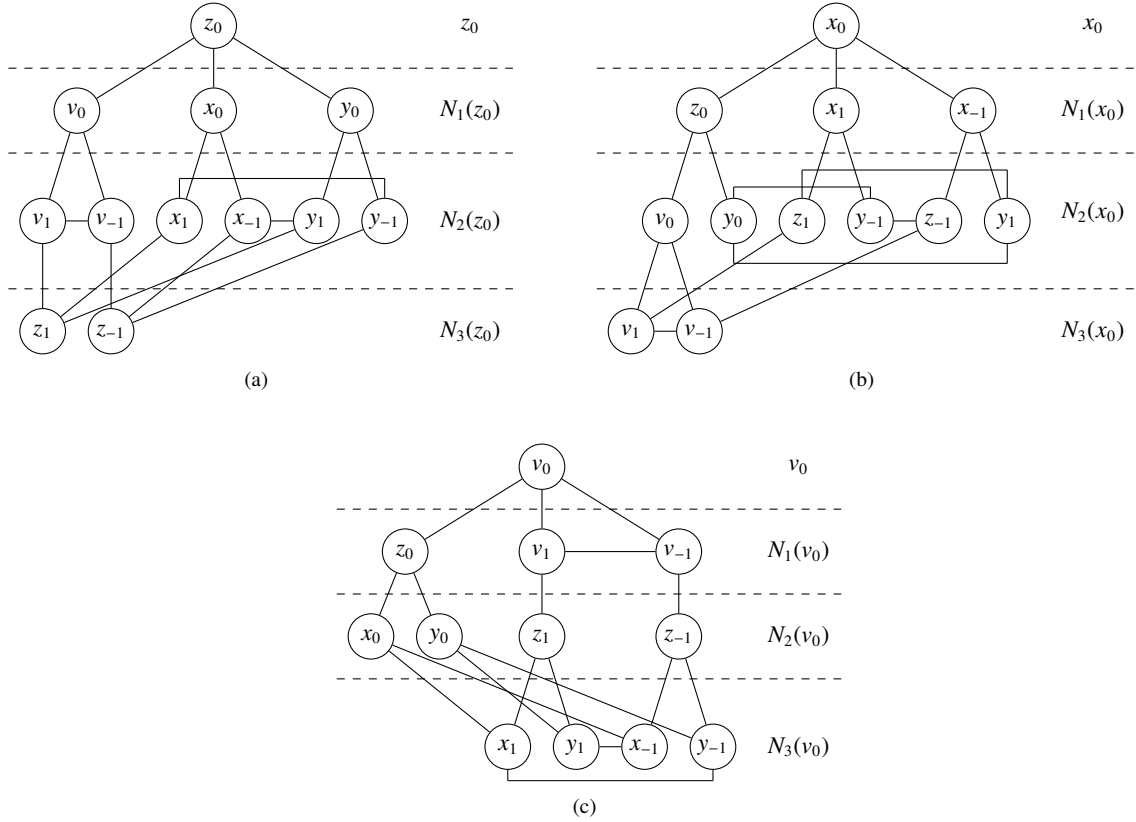

We begin with the target $r = z_0$. Please refer to Figure~\ref{fig:neighborhoods}(a). Let $A = \{ x_0, x_{1}, x_{-1} \}$, $B = \{ y_0, y_{1}, y_{-1} \}$, $E = \{ v_0, v_{1}, v_{-1} \}$ $F = \{z_{1}, z_{-1} \}$, and $X \in \{A, B, E\}$. Observe that an $r$-unsolvable configuration must satisfy $C(X) \leq 4$. An important particular case to mention is that when $C(x) = 1$ for some $x \in N(r) \cap X$, the number of pebbles on $X$ is bounded by $3$. Furthermore, the sets $A$ and $B$ are connected in such a way that each vertex on the neighborhood $N_2(r)$ of one set is adjacent to one vertex of the other set. This particular connection between $A$ and $B$ implies that $r$-unsolvable configurations satisfy $C(A \cup B) \leq 6$. The same connection that holds between the vertices of $N_2(r) \cap A$ and $N_2(r) \cap B$ also holds between the vertices of $F$ and the vertices of $N_2(r) \cap X$. In this way, if $C(F) \geq 2k + 1$, where $k \in \mathbb{N}$, we can transport $k$ pebbles from $F$ to the set $X$. On the other hand, if $C(X) = 4$, we can move a pebble from $X$ to $F$. By combining the former property with $C(X) \leq 4$ and $C(A \cup B) \leq 6$, we get the bounds
\begin{equation}
    \label{eq:XF}
    C(F) + 2 \ C(X) \leq 10, 
\end{equation}
and
\begin{equation}
    \label{eq:ABF}
    C(F) + 2 \ C(A \cup B) \leq 14 .
\end{equation}
respectively. We split the cases \footnote{The $z_0$ cases were written in a generic way to simultaneously solve as many cases as possible of the next target, $x_0$.} according to the number of pebbles in the combined set $A \cup B$. By the symmetry between $A$ and $B$, we assume without loss of generality that $C(A) \geq C(B)$. 
\begin{itemize}
    \item ($C(A \cup B) = 6$) From Eq.~\eqref{eq:ABF}, $C(F) \leq 2$. To place $12$ pebbles on the vertices of the graph, we must have $C(E) = 4$ and $C(F) = 2$. Observe that by Eq.~\eqref{eq:ABF}, the number of pebbles on $A \cup B \cup F$ is maximal. Therefore, we can send a pebble from $E$ to $A \cup B \cup F$ and then contradict Eq.~\eqref{eq:ABF}.
    
    \item ($C(A \cup B) = 5$). Combining Eq.~\eqref{eq:XF} ($X = E$) and Eq.~\eqref{eq:ABF}, we can conclude that $C(E) = 3$. If $C(e) = 1 \ \forall e \in E$, we can move a pebble from $F$ to $E$ and solve the configuration. Otherwise, a vertex of $N_2(r) \cap E$ has at least $2$ pebbles and from this vertex we can send a pebble to $A \cup B \cup F$. This additional pebble on $A \cup B \cup F$ contradicts Eq.~\eqref{eq:ABF} since the number of pebbles on $A \cup B \cup F$ was maximal.
    
    \item ($C(A \cup B) = 4$) Combining Eq.~\eqref{eq:XF} ($X = E$) and Eq.~\eqref{eq:ABF}, we can conclude that $C(E) = 2$. Now, from Eq.~\eqref{eq:XF} ($X = E$) and Eq.~\eqref{eq:ABF}, the number of pebbles is maximal on both the sets $E \cup F$ and $A \cup B \cup F$, which implies that we must have $C(x) \leq 1 \ \forall x \in N_2(r) \cap X$. Next, we conclude that $C(A) = 2$ since otherwise we would have $C(a) = 1 \  \forall a \in A$ and we could send a pebble to $A$ from $F$. Furthermore, since we can send $2$ pebbles to $X$ from $F$, if $C(x) = 1$ for some $x \in N(r) \cap X$, we have a $r$-solvable configuration. Combining everything we have established so far, we can conclude that we must have $C(x) = 0 \ \forall x \in N(r) \cap X$ and $C(x) = 1 \ \forall x \in N_2(r) \cap X$. To finish, possibly with the help of the pebbles on the set $B$, we send from $F$ one pebble to each of the vertices of $N_2(r) \cap A$, solving this case.
    
    \item ($C(A \cup B) \leq 3$) Eq.~\eqref{eq:XF} solves this case as follows. For $C(A \cup B) = 3$, the cases $C(E) \geq 2$ and $C(E) \leq 1$ are solved by using $X = E$ and $X = A$, respectively. Next, for $C(A \cup B) = 2$, $X = E$ and $X = A$ are used to solve the cases $C(E) \geq 1$ and $C(E) = 0$, respectively. Finally, the whole case $C(A \cup B) \leq 1$ is solved with $X = E$.

\end{itemize}

\begin{figure}[!t]
    \centering
    \hspace{-0.075\textwidth}
    \begin{minipage}{0.4\textwidth}
        \centering
        \subfigure[]{
            \begin{tikzpicture}[scale=0.45]

          \tikzstyle{main node}=[circle, draw, minimum size=0.6cm, inner sep=0pt]

          \node[main node] (A) at (5, 0) {\small $v_0$};
          \node[main node] (B) at (5, -2.5) {\small $z_0$};
          \node[main node] (D) at (1, -2.5) {\small $v_{-1}$};
          \node[main node] (E) at (4, -5.75) {\small $x_0$};
          \node[main node] (F) at (6, -5.75) {\small $y_0$};
          \node[main node] (G) at (9, -2.5) {\small $z_{1}$};
          \node[main node] (H) at (1, -5.75) {\small $z_{-1}$};
          \node[main node] (I) at (8, -5.75) {\small $y_1$};
          \node[main node] (J) at (10, -5.75) {\small $x_1$};
          \node[main node] (K) at (0, -9) {\small $x_{-1}$};
          \node[main node] (L) at (2, -9) {\small $y_{-1}$};
          
          \draw (A) -- (B);
          \draw (A) -- (G);
          \draw (A) -- (D);
                  \draw (B) -- (E);
          \draw (B) -- (F);
          \draw (D) -- (H);
          \draw (G) -- (J);
          \draw (G) -- (I);
          \draw (H) -- (K);
          \draw (H) -- (L);
          \draw (E) -- (4, -4.5);
          \draw (J) -- (10, -4.5);
          \draw (4, -4.5) -- (10, -4.5);
          \draw (E) -- (K);
          \draw (F) -- (I);
          \draw (F) -- (L);
          \draw (I) -- (K);
          \draw (J) -- (L);
        
          \draw[dashed] (-1, -1.25) -- (13.75, -1.25);
          \draw[dashed] (-1, -3.75) -- (13.75, -3.75);
          \draw[dashed] (-1, -7.75) -- (13.75, -7.75);
        
          \node at (12.5, 0) {\small $v_0$};
          \node at (12.5, -2.5) {\small $N_1(v_0)$};
          \node at (12.5, -5.75) {\small $N_2(v_0)$};
          \node at (12.5, -9) {\small $N_3(v_0)$};
            
            \end{tikzpicture}
        }
    \end{minipage}
    \hspace{0.06\textwidth}
    \begin{minipage}{0.4\textwidth}
        \centering
        \subfigure[]{
            \begin{tikzpicture}[scale=0.45]

          \tikzstyle{main node}=[circle, draw, minimum size=0.6cm, inner sep=0pt]

          \node[main node] (A) at (5, 0) {\small $v_0$};
          \node[main node] (C) at (7, -2.5) {\small $v_1$};
          \node[main node] (D) at (10, -2.5) {\small $v_{-1}$};
          \node[main node] (E) at (0, -2.5) {\small $x_0$};
          \node[main node] (F) at (4, -2.5) {\small $y_0$};
          \node[main node] (G) at (7, -5.75) {\small $z_{1}$};
          \node[main node] (H) at (10, -5.75) {\small $z_{-1}$};
          \node[main node] (I) at (-1, -5.75) {\small $x_1$};
          \node[main node] (J) at (3, -5.75) {\small $y_1$};
          \node[main node] (K) at (1, -5.75) {\small $x_{-1}$};
          \node[main node] (L) at (5, -5.75) {\small $y_{-1}$};
          
          \draw (A) -- (C);
          \draw (A) -- (D);
          \draw (A) -- (E);
          \draw (A) -- (F);
          \draw (C) -- (D);
          \draw (C) -- (G);
          \draw (D) -- (H);
          \draw (E) -- (I);
          \draw (E) -- (K);
          \draw (F) -- (J);
          \draw (F) -- (L);
          \draw (J) -- (K);
          \draw (I) -- (-1, -4.25);
          \draw (L) -- (5, -4.25);
          \draw (-1, -4.25) -- (5, -4.25);
          \draw (H) -- (10, -7.75);
          \draw (L) -- (5, -7.75);
          \draw (5, -7.75) -- (10, -7.75);
          \draw (G) -- (7, -7.25);
          \draw (J) -- (3, -7.25);
          \draw (3, -7.25) -- (7, -7.25);
          \draw (H) -- (9.25, -4.75);
          \draw (K) -- (1.75, -4.75);
          \draw (1.75, -4.75) -- (9.25, -4.75);
          \draw (G) -- (6.25, -6.75);
          \draw (I) -- (-0.25, -6.75);
          \draw (-0.25, -6.75) -- (6.25, -6.75);
            
          \draw[dashed] (-2, -1.25) -- (13.75, -1.25);
          \draw[dashed] (-2, -3.75) -- (13.75, -3.75);
        
          \node at (12.5, 0) {\small $v_0$};
          \node at (12.5, -2.5) {\small $N_1(v_0)$};
          \node at (12.5, -5.75) {\small $N_2(v_0)$};
            
            \end{tikzpicture}
        }
    \end{minipage}
    \caption{Neighborhood representation of the reduced version graphs used to solve the cases (a) $C(v_1) = 1$ and (b) $C(z_0) = 1$ 
    for the target $v_0$.}
    \label{fig:reduced_v0}
\end{figure}
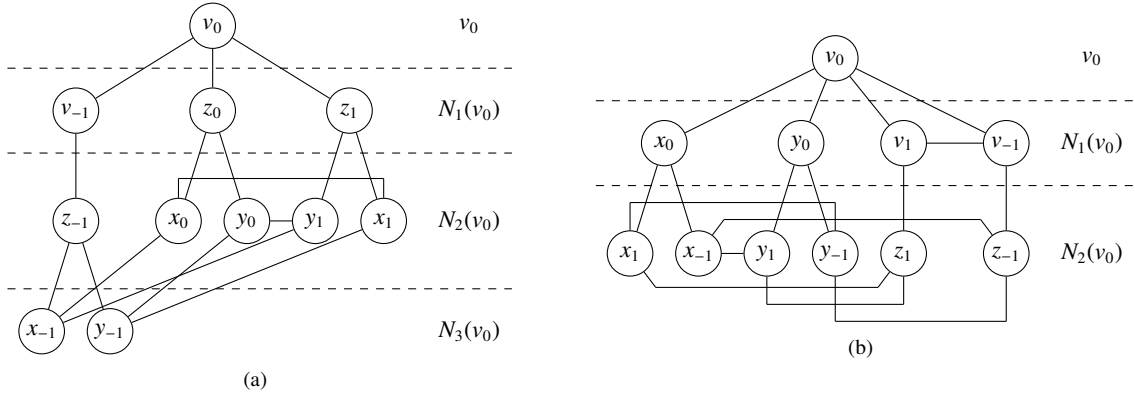

The next target to be analyzed is $r = x_0$. Please refer to Figure~\ref{fig:neighborhoods}(b). The sets $A$, $B$, $E$, and $F$, now for the $x_0$ target analysis are given by $A = \{ x_1, z_{1}, y_{-1} \}$, $B = \{ x_{-1}, z_{-1}, y_{1} \}$, $E = \{ z_0, v_{0}, y_{0} \}$, and $F = \{v_{1}, v_{-1} \}$. As before, we refer to $X \in \{A, B, E\}$. The analysis method and case division on this target are quite similar to the previous one. The main reason for this is that the structure of both cases is closely related, with the individual bounds on the number of pebbles on $X$ still holding here and the relationship between the sets $A$ and $B$ being the same. However, there are important differences between the targets. Firstly, while in the previous target, both vertices of $F$ are adjacent to one vertex on both sets $N_2(r) \cap A$ and $N_2(r) \cap B$, here, each vertex of $F$ is adjacent to exactly one vertex from either $N_2(r) \cap A$ or $N_2(r) \cap B$, restricting the possibilities of the moves. Another difference lies in the relationship between sets $E$ and $F$: although both vertices of $F$ keep adjacent to one vertex of $N_2(r) \cap E$, the reciprocal is no longer true, i.e., only the vertex $v_0$ on $N_2(r) \cap E$ is adjacent to a vertex of $F$. The other vertex, $y_0$, is adjacent to one vertex of both sets $N_2(r) \cap A$ and $N_2(r) \cap B$. More broadly, all the vertices of $N_2(r) \cap E$ are connected with the set of $A \cup B \cup F$, and all vertices of $N_2(r) \cap A$ and $N_2(r) \cap B$ are connected with the set $E \cup F$. Due to this distinct structure, although Eq.~\eqref{eq:ABF} is still valid here, Eq.~\eqref{eq:XF} holds only for $X = E$. However, although weaker, we have the following relationship between $F$ and the sets $A$ and $B$ individually: due to the edge $v_1 v_{-1}$, if $C(F) \geq 4k$, where $k \in \mathbb{N}$, it is always possible to gather at least $2k$ pebbles at any vertex of $F$, and thus send $k$ pebbles to any of the sets $A$ and $B$. Another important move to note is that if $C(E) = 4$, we can send one pebble from $E$ to $A \cup B \cup F$. The cases $C(A \cup B) = 6$, $C(A \cup B) = 5$, and $C(A \cup B) \leq 1$ are solved analogously to the respective cases on the target $z_0$. For the remaining cases, any omitted subcases can likewise be resolved in the same way as their counterparts.
\begin{itemize}
    \item ($C(A \cup B) = 4$) We must have $C(E) = 2$, $C(x) \leq 1 \ \forall x \in N_2(r) \cap X$ and $C(A) = 2$. Now, we need to show that $C(x) = 0 \ \forall x \in N(r) \cap X$. For $X = E$, the argument of the target $z_0$ holds here. On the other hand, for $X = A$ (analogously to $X = B$), we use the following argument. Suppose that $C(x_1) = 1$. If $C(z_1) = 1$, then we can send a second pebble to $z_1$ from the set $F$. Otherwise, $C(y_{-1}) = 1$. In this case, if $C(v_1) \geq 2$ possibly with the help of the edge $v_1 v_{-1}$, we can gather at least $4$ pebbles at vertex $v_1$ and thus send $2$ pebbles to $A$. If $C(v_1) \leq 1$, we can send a second pebble to $y_{-1}$ with the help of the vertex $z_{-1}$, finishing the $X = A$ case. Therefore, similar to the target $z_0$, we have $C(x) = 0 \ \forall x \in N(r) \cap X$ and $C(x) = 1 \ \forall x \in N_2(r) \cap X$. To finish, in addition to the possible help of the pebbles on the set $B$, we may need the help of the edge $v_1 v_{-1}$ to send through $F$ one pebble to each of the vertices of $N_2(r) \cap A$, solving this case.

    \item ($C(A \cup B) = 3$) From Eq.~\eqref{eq:ABF}, we can conclude that $C(E) = 1$. With similar arguments used in the case $C(A \cup B) = 4$ of the target $z_0$, we can conclude that $C(x_1) = 0$, $C(z_1) = C(y_{-1}) = 1$. Now, we send $2$ pebbles to $A \cup B$ from $F$. With the remaining $4$ pebbles in $F$, we can send a pebble to which set among $A$ and $B$ is convenient to gather---possibly with the help of the edge $y_{-1} z_{-1}$---either $4$ pebbles on $z_{1}$ or $2$ pebbles on both $z_{1}$ and $y_{-1}$, solving this case. 

    \item ($C(A \cup B) = 2$) We must have $C(E) = 0$. Since $C(F) = 10$, we can move $4$ pebbles to the vertex $v_0 \in N_2(r)$, which solves this case.
\end{itemize}

Finally, the last target to be analyzed is $r = v_0$. Please refer to Figure~\ref{fig:neighborhoods}(c). Note that, unlike the previous two targets, target $v_0$ has four vertices in the third neighborhood instead of only two, which significantly increases the complexity of the analysis and motivates a case-by-case examination where each neighbor of $v_0$ has exactly one pebble. We begin assuming $C(v_1) = 1$. Firstly, observe that if we add one more pebble to $v_1$, then the configuration is $r$-solvable. Therefore, the neighbors of $v_1$ (except $v_0$) act as neighbors of $v_0$ from the point of view of the pebbling problem. That way, we can eliminate the vertex $v_1$ and link the vertex $z_1$ to $v_0$, resulting in the graph of Figure~\ref{fig:reduced_v0}(a). Let $A = \{ z_{0}, x_{0}, y_0 \}$, $B = \{ z_{1}, x_{1}, y_1 \}$, $E = \{ v_{-1}, z_{-1} \}$, and $F = \{ x_{-1}, y_{-1} \}$. The relationship between all the defined sets is precisely the same as in the case of the target $z_0$. The only difference between the current case and the target $z_0$ case is that now the set $E$ has only a single vertex on $N_2(r)$ instead of two. However, as the number of pebbles decreased to $11$, $C(E) \leq 3$ plays the role of the bound $C(E) \leq 4$ on the analysis of target $z_0$. In this way, all the arguments used on target $z_0$ are directly transferable to this reduced graph, which solves the case $C(v_1) = 1$. The case $C(v_{-1}) = 1$ is solved by symmetry. Now, if $C(z_0) = 1$, by analogous arguments of $C(v_1) = 1$ case, we can reduce $J_3$ to the graph given by Figure~\ref{fig:reduced_v0}(b). We define $A = \{ x_0, x_{1}, x_{-1} \}$, $B = \{ y_0, y_{1}, y_{-1} \}$, $E = \{ v_1, z_{1} \}$, and $F = \{ v_{-1}, z_{-1} \}$. Observe that $C(A \cup B), C(E \cup F) \leq 6$. On the other hand, since we need to place $11$ pebbles on the reduced graph, it follows that $C(A \cup B), C(E \cup F) \geq 5$. In this way, either $C(E \cup F) = 5$ or $C(E \cup F) = 6$. For each case, we can send, respectively, $1$ or $2$ pebbles from $E \cup F$ to $A \cup B$, contradicting $C(A \cup B)  \leq 6$, which solves the case $C(z_0) = 1$.

Due to the analysis of the previous paragraph, we go back to Figure~\ref{fig:neighborhoods}(c), where we set $C(z_0) = C(v_1) = C(v_{-1}) = 0$. Let $A = \{z_1, x_{1}, y_{1} \}$, $B = \{z_{-1} x_{-1}, y_{-1} \}$, $E = \{ x_{0}, y_{0} \}$, considering $X \in \{A, B \}$. The structure here is quite different from the previous targets $z_0$ and $x_0$. In particular, sets $A$ and $B$ are farther away from the target, in such a way that $C(X) \leq 8$. On the other hand, $C(E) \leq 4$ still holds here. Furthermore, if $C(X) \geq 2k + 2$, where $k \in \mathbb{N}$, we can send $k$ pebbles through $X$ to any other set of $\{A, B, E \} \setminus X$. In particular, since $C(E) \leq 4$, $r$-unsolvable configurations must satisfy 
\begin{equation}
    \label{eq:XE}
    C(X) + 2 \ C(E) \leq 11.
\end{equation}
It is important to mention that when $C(x) \geq 1$ for some $x \in N_2(r) \cap X$, the number of pebbles on $X$ is bounded by $7$. On the other hand, if $C(x) = 0$ for some $x \in N_2(r) \cap X$, we need only $2k + 1$ pebbles on $X$ to send $k$ pebbles from $X$ to any other set of $\{A, B, E \} \setminus X$. For target $v_0$, we split the cases according to the number of pebbles in the set $E$. Once again, by the symmetry between $A$ and $B$, we assume that $C(A) \geq C(B)$.
\begin{itemize}
    \item ($C(E) = 4$) From Eq.~\eqref{eq:XE}, we have $C(A), C(B) \leq 3$, and we are not able to place $12$ pebbles.
    
    \item ($C(E) = 3$) From Eq.~\eqref{eq:XE} ($X = A$), we can conclude that $C(A) = 5$. We are able to send one pebble from $B$ to $A$, contradicting Eq.~\eqref{eq:XE} ($X = A$).
    
    \item ($C(E) = 2$) From Eq.~\eqref{eq:XE} ($X = A$), we obtain $C(A) \leq 7$. If $C(A) = 6$, then we can send one pebble from $B$ to $E$ and Eq.~\eqref{eq:XE} ($X = A$) is contradicted. Now, if $C(A) = 7$ or $C(A) = 5$, we send one pebble to $E$ from respectively $A$ or $B$. Observe that now we have $C(E) = 3$, $C(A) = 5$, and $C(B) = 3$. We will prove in the next item a stronger claim that this situation is $r$-solvable even if we had only $4$ pebbles in $A$. 

    \item ($C(E) = 1$) If $C(A) = 7$ or $C(A) = 8$, we send $2$ pebbles from $A$ to $E$, while otherwise, $C(A) = 6$, and we also send $2$ pebbles to $E$, but one from $A$ and the other from $B$. In all situations, we have $C(E) = 3$, and one of the sets among $A$ and $B$ has $4$ pebbles, while the other set has $3$ pebbles. We set $C(A) = 4$ and $C(B) = 3$ without loss of generality. Unless $C(b) = 1 \ \forall b \in B$ or $C(z_{-1}) = 3$, we can always get at least two pebbles at one vertex of $N_3(r) \cap B$. In the latter case, we can send a fourth pebble to $E$ from $B$, contradicting Eq.~\eqref{eq:XE} ($X = A$). Else, suppose that $C(b) = 1 \ \forall b \in B$ or $C(z_{-1}) = 3$. We will use the following observations to prove that both situations are $r$-solvable: firstly, we can get a pebble at any vertex of $N_3(r) \cap B$ that we choose; secondly, since $C(E) = 3$, there exists a vertex $e \in E$ such that if it receives one more pebble, then we have a $r$-solvable configuration; thirdly, since $C(A) = 4$, we can get $2$ pebbles at some vertex $a \in N_3(r) \cap A$. That way, we can send the desired pebble to $e$ through the edge $ae$ if $a$ and $e$ are adjacent or, otherwise, use a vertex $N_3(r) \cap B$, whichever is convenient, to transport the pebble to $e$, solving this case.
    
    \item ($C(E) = 0$) If $C(A) = 8$, then we can send a ninth pebble to $A$ from $B$, solving this case. Now, let $C(A) = 7$. Since we can send an eighth pebble to $A$ from $B$, we must have $C(z_{1}) = 0$. Thus, we can send $3$ pebbles to $B$ from $A$ and similarly conclude that $C(z_{-1}) = 0$. Therefore, we can send $5$ pebbles to $E$, being $3$ from $A$ and $2$ from $B$. To finish, let $C(A) = 6$. Since we can send $2$ pebbles from $B$ to $A$, we can conclude that $C(z_{1}) = 0$. Symmetrically, $C(z_{-1}) = 0$. Now, we send $3$ pebbles to $E$, being $1$ from $A$ and $2$ from $B$. Observe that now we have $C(E) = 3$, $C(A) = 4$, $C(B) = 2$. If $C(b) = 2$ for some $b \in N_3(r) \cap B$, we can send a fourth pebble to $E$ from $B$ and thus contradict Eq.~\eqref{eq:XE} ($X = A$). Therefore, $C(b) = 1 \ \forall b \in N_3(r) \cap B$. With an argument analogous to the previous item, in which we define auxiliary vertices $a$ and $e$, we can transport a pebble from $a$ to $e$ and solve this case.
\end{itemize}

The analysis of three targets $z_0$, $x_0$, and $v_0$ proves that every configuration with 12 pebbles is $r$-solvable for every $r \in V(J_3)$.
\end{proof}

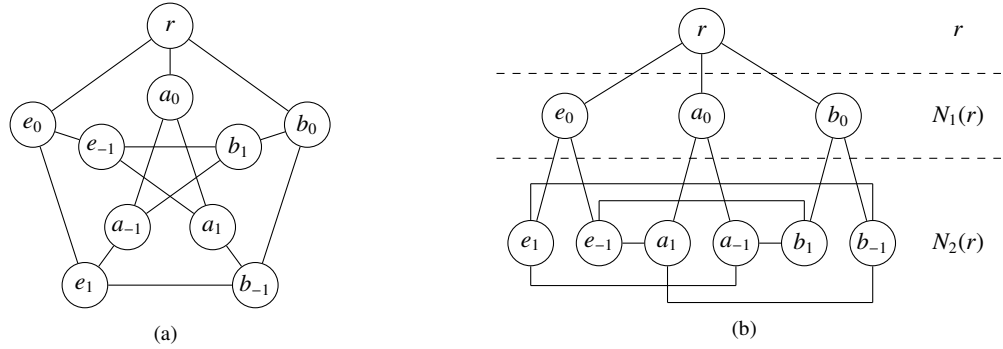
\begin{figure}[t]
    \centering
    \hspace{-0.075\textwidth}
    \begin{minipage}{0.5\textwidth}
        \centering
        \subfigure[]{
            \begin{tikzpicture}[scale=0.95]

          \tikzstyle{main node}=[circle, draw, minimum size=0.6cm, inner sep=0pt]

            \node (E1)[main node] at (-6.00, 2.00) {\small $r$};
            \node (E2)[main node] at (-7.90, 0.62) {\small $e_0$};
            \node (E3)[main node] at (-7.18, -1.62) {\small $e_1$};
            \node (E4)[main node] at (-4.82, -1.62) {\small $b_{-1}$};
            \node (E5)[main node] at (-4.10, 0.62) {\small $b_0$};
            \node (F1)[main node] at (-6.00, 1.00) {\small $a_0$};
            \node (F2)[main node] at (-6.95, 0.31) {\small $e_{-1}$};
            \node (F3)[main node] at (-6.59, -0.81) {\small $a_{-1}$};
            \node (F4)[main node] at (-5.41, -0.81) {\small $a_1$};
            \node (F5)[main node] at (-5.05, 0.31) {\small $b_1$};
    
            \draw (E1) -- (E2);
            \draw (E2) -- (E3);
            \draw (E3) -- (E4);
            \draw (E4) -- (E5);
            \draw (E5) -- (E1);
            \draw (F1) -- (F3);
            \draw (F3) -- (F5);
            \draw (F5) -- (F2);
            \draw (F2) -- (F4);
            \draw (F4) -- (F1);
            \draw (E1) -- (F1);
            \draw (E2) -- (F2);
            \draw (E3) -- (F3);
            \draw (E4) -- (F4);
            \draw (E5) -- (F5);
            
            \end{tikzpicture}
        }
    \end{minipage}
    \begin{minipage}{0.4\textwidth}
        \centering
        \subfigure[]{
            \begin{tikzpicture}[scale=0.45]

        \tikzstyle{main node}=[circle, draw, minimum size=0.6cm, inner sep=0pt]

          \node[main node] (A) at (5, 0) {\small $r$};
          \node[main node] (B) at (1, -2.5) {\small $e_0$};
          \node[main node] (C) at (5, -2.5) {\small $a_0$};
          \node[main node] (D) at (9, -2.5) {\small $b_0$};
          \node[main node] (E) at (0, -6.25) {\small $e_1$};
          \node[main node] (F) at (2, -6.25) {\small $e_{-1}$};
          \node[main node] (G) at (4, -6.25) {\small $a_1$};
          \node[main node] (H) at (6, -6.25) {\small $a_{-1}$};
          \node[main node] (I) at (8, -6.25) {\small $b_1$};
          \node[main node] (J) at (10, -6.25) {\small $b_{-1}$};
          
          \draw (A) -- (B);
          \draw (A) -- (C);
          \draw (A) -- (D);
          \draw (B) -- (E);
          \draw (B) -- (F);
          \draw (C) -- (G);
          \draw (C) -- (H);
          \draw (D) -- (I);
          \draw (D) -- (J);
          \draw (F) -- (G);
          \draw (H) -- (I);
          \draw (E) -- (0, -7.5);
          \draw (H) -- (6, -7.5);
          \draw (0, -7.5) -- (6, -7.5);
          \draw (G) -- (4, -8);
          \draw (J) -- (10, -8);
          \draw (4, -8) -- (10, -8);
          \draw (F) -- (2, -5);
          \draw (I) -- (8, -5);
          \draw (2, -5) -- (8, -5);
          \draw (E) -- (0, -4.5);
          \draw (J) -- (10, -4.5);
          \draw (0, -4.5) -- (10, -4.5);
          
          \draw[dashed] (-1, -1.25) -- (13.75, -1.25);
          \draw[dashed] (-1, -3.75) -- (13.75, -3.75);
        
          \node at (12.5, 0) {\small $r$};
          \node at (12.5, -2.5) {\small $N_1(r)$};
          \node at (12.5, -6.25) {\small $N_2(r)$};
            
            \end{tikzpicture}
        }
    \end{minipage}
    \caption{Proof that Petersen graph $P$ is Class~$0$ by our bound-based approach. Subfigure~\ref{fig:petersen}(a) shows the standard representation of the Petersen graph, while Subfigure~\ref{fig:petersen}(b) shows its neighborhood representation. Petersen is a vertex-transitive graph, reducing the analysis to one target $r$. We assume by contradiction that every configuration with $n(P) = 10$ pebbles is $r$-unsolvable. Let $A = \{a_0, a_{-1}, a_{1} \}$, $B = \{b_0, b_{-1}, b_{1} \}$, $E = \{e_0, e_{-1}, e_{1} \}$, and $X, Y, Z \in \{A, B, E \}, Y \neq Z$. We have $C(X) \leq 4$. For any pair $Y, Z$, we have the same connection between $Y$ and $Z$ that we had between $A$ and $B$ for the targets $z_0$ and $x_0$ of $J_3$, i.e., every vertex of $N_2(r) \cap Y$ is adjacent to one vertex of $N_2(r) \cap Z$, and vice versa. So, it follows that $C(Y \cup Z) \leq 6$. We assume without loss of generality that $C(A) \geq C(B), C(E)$. Since we need to place $10$ pebbles on the vertices of the graph, we must have $C(A) = 4$. On the other hand, since $C(Y \cup Z) \leq 6$, we must have $C(B), C(E) \leq 2$. Therefore, we can never place $10$ pebbles in an $r$-unsolvable configuration.}
    \label{fig:petersen}
\end{figure}

\begin{figure}[!t]
    \centering
    \hspace{-0.01\textwidth}
    \begin{minipage}{0.3\textwidth}
        \centering
        \subfigure[]{
            \begin{tikzpicture}[scale=0.35]

            \tikzstyle{main node}=[circle, draw, minimum size=0.4cm, inner sep=0pt]

              \node[main node, label=above:{\scriptsize $r$}] (A) at (5, 0) {\scriptsize $1$};
              \node[main node, label=above:{\scriptsize $0$}] (B) at (1, -2.5) {\scriptsize $2$};
              \node[main node, label=left:{\scriptsize $0$}] (C) at (5, -2.5) {\scriptsize $3$};
              \node[main node, label=above:{\scriptsize $0$}] (D) at (10.75, -2.5) {\scriptsize $4$};
              \node[main node, label=left:{\scriptsize $1$}] (E) at (0, -5) {\scriptsize $10$};
              \node[main node, label=right:{\scriptsize $0$}] (F) at (2, -5) {\scriptsize $9$};
              \node[main node, label=right:{\scriptsize $0$}] (G) at (5, -5) {\scriptsize $12$};
              \node[main node, label=left:{\scriptsize $1$}] (H) at (10.75, -5) {\scriptsize $11$};
              \node[main node, label=below:{\scriptsize $0$}] (I) at (4, -8.25) {\scriptsize $8$};
              \node[main node, label=right:{\scriptsize $7$}] (J) at (6, -8.25) {\scriptsize $6$};
              \node[main node, label=left:{\scriptsize $3$}] (K) at (9.75, -8.25) {\scriptsize $7$};
              \node[main node, label=right:{\scriptsize $0$}] (L) at (11.75, -8.25) {\scriptsize $5$};
              
              \draw (A) -- (B);
              \draw (A) -- (C);
              \draw (A) -- (D);
              \draw (B) -- (E);
              \draw (B) -- (F);
              \draw (C) -- (G);
              \draw (D) -- (H);
              \draw (G) -- (I);
              \draw (G) -- (J);
              \draw (H) -- (K);
              \draw (H) -- (L);
              \draw (C) -- (D);
              \draw (E) -- (I);
              \draw (F) -- (K);
              \draw (E) -- (F);
              \draw (I) -- (J);
              \draw (K) -- (L);
              \draw (J) -- (6, -9.25);
              \draw (L) -- (11.75, -9.25);
              \draw (6, -9.25) -- (11.75, -9.25);
            
            \end{tikzpicture}
        }
    \end{minipage}
    \hspace{0.03\textwidth}
    \begin{minipage}{0.3\textwidth}
        \centering
        \subfigure[]{
            \begin{tikzpicture}[scale=0.35]

          \tikzstyle{main node}=[circle, draw, minimum size=0.4cm, inner sep=0pt]

              \node[main node, label=above:{\scriptsize $r$}] (A) at (5, 0) {\scriptsize $6$};
              \node[main node, label=above:{\scriptsize $0$}] (B) at (1, -2.5) {\scriptsize $8$};
              \node[main node, label=left:{\scriptsize $0$}] (C) at (5, -2.5) {\scriptsize $5$};
              \node[main node, label=above:{\scriptsize $0$}] (D) at (9, -2.5) {\scriptsize $9$};
              \node[main node, label=left:{\scriptsize $0$}] (E) at (0, -5) {\scriptsize $12$};
              \node[main node, label=below:{\scriptsize $0$}] (F) at (2, -5) {\scriptsize $10$};
              \node[main node, label=right:{\scriptsize $0$}] (G) at (5, -5) {\scriptsize $7$};
              \node[main node, label=left:{\scriptsize $0$}] (H) at (9, -5) {\scriptsize $4$};
              \node[main node, label=left:{\scriptsize $1$}] (I) at (-1.5, -8.25) {\scriptsize $2$};
              \node[main node, label=left:{\scriptsize $1$}] (J) at (1.5, -8.25) {\scriptsize $3$};
              \node[main node, label=right:{\scriptsize $5$}] (K) at (5, -8.25) {\scriptsize $11$};
              \node[main node, label=left:{\scriptsize $5$}] (L) at (9, -8.25) {\scriptsize $1$};
              
              \draw (A) -- (B);
              \draw (A) -- (C);
              \draw (A) -- (D);
              \draw (B) -- (E);
              \draw (B) -- (F);
              \draw (C) -- (G);
              \draw (D) -- (H);
              \draw (C) -- (D);
              \draw (G) -- (K);
              \draw (H) -- (L);
              \draw (F) -- (G);
              \draw (E) -- (I);
              \draw (E) -- (J);
              \draw (J) -- (K);
              \draw (F) -- (3.25, -3.75);
              \draw (3.25, -3.75) -- (7.75, -3.75);
              \draw (7.75, -3.75) -- (H);
              \draw (I) -- (-0.25, -9.25);
              \draw (-0.25, -9.25) -- (3.75, -9.25);
              \draw (3.75, -9.25) -- (K);
              \draw (J) -- (3, -7.25);
              \draw (3, -7.25) -- (7.5, -7.25);
              \draw (7.5, -7.25) -- (L);
              \draw (I) -- (-1.5, -9.75);
              \draw (-1.5, -9.75) -- (9, -9.75);
              \draw (9, -9.75) -- (L);
            
            \end{tikzpicture}
        }
    \end{minipage}
    \hspace{0.02\textwidth}
    \begin{minipage}{0.3\textwidth}
        \centering
        \subfigure[]{
            \begin{tikzpicture}[scale=0.35]

            \tikzstyle{main node}=[circle, draw, minimum size=0.35cm, inner sep=0pt]

              \node[main node, label=above:{\scriptsize $r$}] (A) at (5, 0) {\scriptsize $1$};
              \node[main node, label=above:{\scriptsize $0$}] (B) at (1, -2.5) {\scriptsize $4$};
              \node[main node, label=left:{\scriptsize $0$}] (C) at (5, -2.5) {\scriptsize $3$};
              \node[main node, label=above:{\scriptsize $0$}] (D) at (11, -2.5) {\scriptsize $2$};
              \node[main node, label=left:{\scriptsize $0$}] (E) at (0, -5) {\scriptsize $9$};
              \node[main node, label=right:{\scriptsize $0$}] (F) at (2, -5) {\scriptsize $8$};
              \node[main node, label=right:{\scriptsize $1$}] (G) at (5, -5) {\scriptsize $11$};
              \node[main node, label=above:{\scriptsize $1$}] (H) at (8, -5) {\scriptsize $12$};
              \node[main node, label=left:{\scriptsize $1$}] (I) at (11, -5) {\scriptsize $10$};
              \node[main node, label=left:{\scriptsize $7$}] (J) at (0, -8.25) {\scriptsize $5$};
              \node[main node, label=right:{\scriptsize $1$}] (K) at (5, -8.25) {\scriptsize $7$};
              \node[main node, label=left:{\scriptsize $1$}] (L) at (11, -8.25) {\scriptsize $6$};
              
              \draw (A) -- (B);
              \draw (A) -- (C);
              \draw (A) -- (D);
              \draw (B) -- (E);
              \draw (B) -- (F);
              \draw (C) -- (G);
              \draw (C) -- (H);
              \draw (D) -- (H);
              \draw (E) -- (F);
              \draw (D) -- (I);
              \draw (E) -- (J);
              \draw (G) -- (K);
              \draw (I) -- (L);
              \draw (J) -- (K);
              \draw (G) -- (L);
              \draw (I) -- (K);
              \draw (F) -- (2, -6.25);
              \draw (2, -6.25) -- (8, -6.25);
              \draw (8, -6.25) -- (H);
              \draw (J) -- (0, -9.25);
              \draw (0, -9.25) -- (11, -9.25);
              \draw (11, -9.25) -- (L);
            
            \end{tikzpicture}
        }
    \end{minipage}

    \caption{Unsolvable configurations with $12$ pebbles for three cubic graphs with $12$ vertices, girth~$3$, and diameter~$3$: (a) graph \#1395 (truncated tetrahedral graph), (b) graph \#44170, and (c) graph \#44172. The target vertex of each graph is indicated by $r$. The numbers inside the vertices correspond to their labels in the House of Graphs database, while the values outside indicate the number of pebbles assigned to each vertex. Unsolvability was verified through an exhaustive search over all possible pebbling move combinations, using a tree as the underlying data structure. The software used in this search is available at https://github.com/gabridi/pebbling\_unsolvability.}
    \label{fig:unsolvables}
\end{figure}
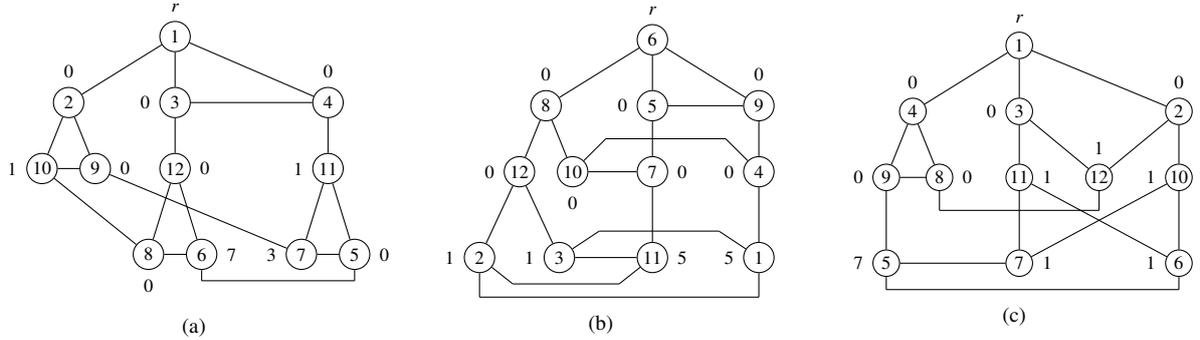

\paragraph{Final remarks}
The effectiveness of our bound-based approach seems to be closely related to the diameter of the graph---more precisely, to the eccentricity of each target vertex. Graphs with diameter $2$, such as the Petersen graph (see Figure~\ref{fig:petersen}) and the reduced graph of Figure~\ref{fig:reduced_v0}(b) allow for a concise and direct analysis. On the other hand, the demonstration for $J_3$, a $3$-diameter graph, was highly intricate, demanding a much more detailed analysis. Beyond diameter, the study of $J_3$ shows that the number of vertices in the outermost neighborhood also plays a determining role. In particular, the target vertex $v_0$, with four vertices at distance $3$, required substantially greater effort than $z_0$ and $x_0$---which have only two---including a separate analysis of the cases where each of its neighbors has exactly one pebble. This evidences that the effectiveness of our method depends not only on the eccentricity of the target but also on the structural depth around the target, i.e., how many vertices lie in each neighborhood. The practical difficulties observed are quite intuitive, as a greater distance-weighted distribution of vertices makes it harder to establish bounds and efficiently move the pebbles toward the target. Extending our method beyond diameter~$3$ is unclear, as deeper structures challenge the current bounding strategy. This remains open for future investigation. It is worth noting that, although deciding on the solvability of a given configuration is NP-complete~\cite{pebbling_completeness1, pebbling_completeness2}, this does not preclude the applicability of our method to infinite families of well-structured graphs----for instance, by defining bounds and move strategies in a recursive manner.

While extending our method beyond $3$-diameter graphs remains uncertain, our findings indicate great potential for $3$-diameter graphs. A characterization for the pebbling number of a graph with diameter~$2$ is already established in the literature (see Ref.~\cite{diameter2}). In contrast, what we know about $3$-diameter graphs is that their pebbling number is upper bounded by $3n(G)/2 + \mathcal{O}(1)$~\cite{diameter3}. In this way, within the $3$-diameter context, much remains to be explored, and our approach may provide a valuable tool for advancing the understanding of the pebbling number of these graphs. As a suggestion of application, one natural direction is to consider other cubic graphs with $12$ vertices, girth~$3$, and diameter~$3$—--the same parameters as the Flower snark $J_3$. According to the House of Graphs database~\cite{house_of_graphs}, only five such graphs exist. Besides $J_3$, the others include graphs with IDs\footnote{These identifiers refer to entries in the \emph{House of Graphs} online database: \url{https://houseofgraphs.org}.} \#1395, \#6698, \#44170, and \#44172. The graph \#1395 is known as the truncated tetrahedral graph. We find unsolvable configurations with $12$ pebbles for the graphs \#1395, \#44170, and \#44172 (see Figure~\ref{fig:unsolvables}), implying that these graphs are not Class~$0$. One could investigate whether the pebbling number of these three graphs is indeed $13$; and whether graph \#6698 is Class~$0$, i.e., determine whether $J_3$ is the only Class~$0$ graph among the five with the same parameters. 

Finally, it is important to mention that although our method was used to prove that $J_3$ is Class~$0$, what our method actually establishes is that every configuration with $x$ pebbles is $r$-solvable for every $r \in V(G)$—thereby yielding the upper bound $\pi(G) \leq x$. The larger the gap between $x$ and the actual pebbling number, the easier the proof tends to be. For instance, in the case of $J_3$, one could verify that the targets $z_0$ and $x_0$ can be fully solved using only Eq.~\eqref{eq:XF} ($X = E$) and Eq.~\eqref{eq:ABF} if $x \geq 13$. Thus, our method can be particularly useful for tightening the upper bounds of graphs with a large window for the pebbling number.


\end{document}